\documentclass{article}
\usepackage{graphicx,amsmath,amsfonts,amssymb,amsthm,xfrac,mathtools,dsfont} 
\usepackage[margin=1in]{geometry}

\title{Gelfand-type problem for turbulent jets: \\ sharp $L^\infty$ bound on extremal solutions}
\author{Alex Czemerinski, Alexander Mikheyenko, Noah Tannas, Philip Yao}
\date{\today}
\newtheorem{thm}{Theorem}[section]
\newtheorem{lem}[thm]{Lemma}
\newtheorem{prop}[thm]{Proposition}
\begin{document}

\maketitle
\begin{abstract}
    We consider a model of thermal explosion in reactive turbulent jets introduced in [GHH], which falls into the general class of Gelfand-type problems. Our main result closes the gap in [GMN], where the question of finding an explicit sharp bound for the $L^{\infty}$ norm of the extremal solution in the asymptotic limit of strong flows was left partially unresolved. Namely, we obtain the matching upper and lower bounds under no additional assumptions on the nonlinear reaction rate.
\end{abstract}
\section{Introduction}
We consider the following boundary value problem:
\begin{equation}
        \begin{cases}-\Delta u - \alpha r \varphi (r)\frac{\partial}{\partial r} u=\lambda \psi (r)f(u) &\text{in }B,
        \\ u>0 & \text{in }B,
        \\ u=0 & \text{on }\partial B,
        \end{cases}
    \end{equation}
    where $B\subset \mathbb R^2$ is the unit disk centered at the origin; $\lambda,\alpha>0$ are parameters; $f>0$ is an increasing, convex, and $C^1$ function satisfying
    \[K\coloneqq\int_0^\infty \frac{\mathrm{d}s}{f(s)}<\infty;\]
    and $\varphi,\psi\geq 0$ are decreasing Lipschitz functions on $[0,1]$ such that $\varphi(0)=\psi(0) = 1$, $\varphi>0$ on $[0,1)$, and 
    \[\int_0^1 M(s)\mathrm{d}s < \infty,\]
    where
    \[M(s) \coloneqq \max_{r\in[0,s]}\frac{\psi(r)}{\varphi(r)}.\]
    This problem was derived in [GHH] as a model for the thermal explosion in a reactive turbulent jet. In the context of this model, $u$ is an appropriately normalized temperature, $f$ describes the reaction rate, $\varphi$ and $\psi$ are the flow velocity and reactive component profiles, respectively, $\alpha$ is the injection velocity, and $\lambda$ is the Frank-Kamenetskii parameter characterizing the intensity of the reaction. \\ \newline 
    In [GMN, Proposition 1.1] it was shown that there exists $\lambda^* = \lambda^*(\alpha)\in(0,\infty)$ such that (1) admits a unique minimal classical solution $u^\#_{\lambda,\alpha}$ for $\lambda <\lambda^*$, an extremal solution $u^*_\alpha$ defined as
    \[u_\alpha^*(x) = \lim_{\lambda\nearrow \lambda^*}u^\#_{\lambda,\alpha}(x)\]
    which is also a classical solution for $\lambda = \lambda^*$, and no classical solutions when $\lambda>\lambda^*$.
    Additionally, it was shown that such solutions are radially symmetric and satisfy the semi-stability condition
\begin{equation}
    \int_B |\nabla\eta(\textbf{x})|^2\mu(r)\mathrm{d}\textbf{x} \geq \lambda \int_B\eta^2(\mathbf x)\psi(r)\mu(r)f'(u^\#_{\lambda,\alpha}(r))\mathrm{d}\textbf{x},\hspace{.5cm}\forall \eta\in H_0^1(B),
\end{equation}
where
\begin{equation*}
    \mu(r) \coloneqq \exp\left(\alpha\int_0^r s\varphi(s)\mathrm{d}s\right).
\end{equation*}
In [GMN, Theorem 1.1], it was shown that 
\begin{equation}
    \lim_{\alpha\to\infty}\lambda^*(\alpha)\left(\frac{2K\alpha}{\log\alpha}\right)^{-1}=1.
\end{equation}
In [GMN, Theorem 1.2] it was shown $u_\alpha^*(x)\to0$ for $x\neq0$ and $u_\alpha^*(0)\to\infty$ as $\alpha\to\infty$. An upper bound on $u_\alpha^*(0)$ was established only under additional assumptions on the non-linearity. We show the desired upper bound holds without such additional assumptions and prove a matching lower bound. 
\\ \\
In what follows, fix arbitrary $\varphi, \psi,$ and $f$ satisfying the above conditions.
\begin{thm}
   There exist $\alpha_0,c,C\in (0,\infty)$ such that for all $\alpha\geq\alpha_0$, 
   \[u_\alpha^*(0)\leq cA,\]
   where $A$ solves
   \[f'(A) = C\log \alpha. 
   \]
\end{thm}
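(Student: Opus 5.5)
We bound $u^*_\alpha(0)$ through the semi-stability inequality (2) with a test function localized at the scale where $u^*_\alpha$ is large, plus the ODE for radial solutions. Write $u=u^*_\alpha$ and $\lambda=\lambda^*(\alpha)$. By (3), $\lambda\sim 2K\alpha/\log\alpha$. Since (2) holds for each minimal solution with $\lambda<\lambda^*$, and $u^\#_{\lambda,\alpha}$ increases to $u$ classically, we may pass to the limit and use (2) for $u$ itself.

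\emph{Step 1 (a priori profile).} Writing the equation in divergence form gives $-(r\mu u')'=\lambda r\mu\psi f(u)$, so $u$ is radially decreasing. Integrating from $0$ to $r$ and using that $\mu\approx e^{\alpha r^2/2}$ near the origin, we obtain $-u'(r)\le \lambda f(u(0))\, r/2$ for small $r$. Hence $u$ stays within a bounded fraction of $u(0)$ on a disk of radius $\rho\sim (\lambda f(u(0)))^{-1/2}$. We will also need that $u$ decays at a scale comparable to $\alpha^{-1/2}$, up to logarithmic factors. For this we integrate the ODE against $1/f(u)$ and use $\int_0^\infty ds/f(s)=K$ together with the asymptotics (3); this is the mechanism in [GMN, Theorem 1.1]. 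The conclusion we aim for is that $u\ge u(0)/2$ on $B_\rho$ with $\rho^2\gtrsim 1/\alpha$. The idea is that if $u$ dropped faster, the flux balance $-r\mu u'=\lambda\int_0^r s\mu\psi f(u)\,ds$ would force $\lambda$ to exceed the sharp value in (3).

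\emph{Step 2 (test function).} Insert into (2) the cutoff $\eta(r)=\min\{1,\log(R/r)/\log(R/\rho)\}$ for $r\le R$ and $\eta=0$ beyond, with $R\sim\alpha^{-1/2}$ so that $\mu$ stays bounded on $B_R$. The left side is then $\lesssim 1/\log(R/\rho)$. The right side is $\gtrsim \lambda\rho^2 f'(u(0)/2)$, by Step 1 and since $\psi\approx 1$ near $0$. We combine this with $\lambda\sim\alpha/\log\alpha$ and the lower bound $\rho^2\gtrsim1/\alpha$ from Step 1. If $\rho$ is much smaller than $R$, the logarithm $\log(R/\rho)$ contributes an extra factor instead.

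In both cases the result is $f'(u(0)/2)\lesssim \log\alpha$, possibly after replacing $u(0)/2$ by $u(0)/c$. Because $f'$ is increasing, $f'(u(0)/c)\le C\log\alpha=f'(A)$ gives $u(0)\le cA$. The constants $c,C$ are chosen large to absorb all the comparability constants above.

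\emph{Main obstacle.} The delicate step is the lower bound on the plateau radius $\rho$ in terms of $\alpha$ alone. Using $f(u(0))$ naively in $\rho$ makes the estimate circular. We therefore first try the energy/flux identity: multiply the ODE by $u'/f(u)$ and integrate. This relates $\lambda$, $K$ and $\int \mu |u'|^2/f(u)$, and comparing with the sharp asymptotics (3) should force $u$ to remain comparable to $u(0)$ up to radius $\sim\alpha^{-1/2}$, possibly with a $\log$ loss that only changes $C$. If that fails, we will use a dyadic choice of the level $u(0)/c$ and apply convexity to get $f(u(0))\ge f(u(0)/2)+\tfrac{u(0)}{2}f'(u(0)/2)$, which converts $f$ bounds into $f'$ bounds.
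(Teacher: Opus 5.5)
\textbf{Gap.} Your argument depends entirely on the plateau claim of Step 1, that $u\ge u(0)/2$ on a disk of radius $\rho$ with $\alpha\rho^2\ge c_0$ for some $c_0>0$ independent of $\alpha$. This claim is not proved, and it is not a preliminary: it carries the whole content of the theorem.

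To see this, pass to $z=\alpha r^2$ and $F=\frac{\lambda^*}{4\alpha}f$. The radial ODE gives $\psi_0e^{-1/2}F(\bar u(z))\le-\bar u'(z)\le e^{1/2}F(\bar u(0))$ for $z\in(0,1]$. Your own upper gradient estimate yields the plateau out to $\rho^2=2u(0)/(\lambda^*f(u(0)))$; you dropped the factor $u(0)$. For this $\rho$, the requirement $\alpha\rho^2\gtrsim1$ is exactly $F(\bar u(0))\lesssim\bar u(0)$.

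Conversely, suppose $\bar u\ge\bar u(0)/2$ on $[0,c_0]$. Integrating the lower bound gives $\bar u(0)\ge\psi_0e^{-1/2}c_0F(\bar u(0)/2)$. Convexity gives $F(\bar u(0)/2)\ge\frac{\bar u(0)}{4}F'(\bar u(0)/4)$, and this is already the theorem with $c=4$. So once Step 1 holds, Step 2 and semi-stability are not needed at all.

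Your suggested routes do not produce the plateau. First, (3) fixes $\lambda^*$ only up to a factor $1+o(1)$. Second, the $\int \mathrm ds/f(s)$ flux balance constrains $\lambda^*$, not the profile near $0$. For rapidly growing $f$, a thin peak at the origin followed by slower decay costs only a lower-order amount in that balance, well inside the $o(1)$. Third, convexity merely converts an $F$-bound into an $F'$-bound.

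The fallback case $\rho\ll R$ in Step 2 also fails. With $t=\alpha\rho^2\to0$ you only get $f'(u(0)/2)\lesssim\log\alpha/(t\log(1/t))$. Since $t\log(1/t)\to0$, the extra logarithm does not compensate for the lost factor $\rho^2$.

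\textbf{Missing idea.} Use semi-stability first, with no information on the profile, to get a plateau for the reaction term $F(\bar u)$ rather than for $\bar u$.

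Place your logarithmic cutoff between $z$ and $1$ in the $z$-variable: equal to $1$ on $[0,z]$ and to $\log(1/s)/\log(1/z)$ on $[z,1]$. Inserting it in (7), and using $\bar\mu\le e^{1/2}$ and $\bar\psi\ge\psi_0$ on $[0,1]$, gives
\[\int_0^zF'(\bar u(s))\,\mathrm ds\le \frac{e^{1/2}}{\psi_0\log(1/z)}.\]
The chain rule and the gradient upper bound then give
\[F(\bar u(0))-F(\bar u(z))=\int_0^zF'(\bar u(s))\,(-\bar u'(s))\,\mathrm ds\le e^{1/2}F(\bar u(0))\int_0^zF'(\bar u(s))\,\mathrm ds\le\frac{e\,F(\bar u(0))}{\psi_0\log(1/z)}.\]
Hence $F(\bar u)\ge\frac12F(\bar u(0))$ on a fixed interval $[0,z_*]$ with $z_*$ independent of $\alpha$. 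The unknown $F(\bar u(0))$ appears on both sides and cancels, which removes the circularity you ran into.

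Now integrate the gradient lower bound over $[0,z_*]$ and use $\bar u(z_*)\ge0$. This gives $F(\bar u(0))\le\bar u(0)/I$ with $I=\psi_0e^{-1/2}z_*/2$. Convexity, $F(\bar u(0))\ge\frac{\bar u(0)}{2}F'(\bar u(0)/2)$, then yields $f'(u(0)/2)\le\frac{8\alpha}{I\lambda^*}\lesssim\log\alpha$ by (3). This is the paper's proof.
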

\begin{thm}
    There exist $\tilde{\alpha}_0,\tilde{c},\widetilde{C}\in(0,\infty)$ such that for all $\alpha\geq\tilde{\alpha}_0$, we have
    \[u_\alpha^*(0) \geq \tilde{c}\tilde{A},\]
    where $\tilde{A}$ solves
    \[f'(\tilde{A}) = \widetilde{C}\log \alpha.\]
\end{thm}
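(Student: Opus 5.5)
The plan is to argue by contradiction using the linearized operator at the extremal solution. At $\lambda=\lambda^*$ the extremal solution $u^*_\alpha$ is classical, so its linearization
\[
L\eta = -\Delta\eta-\alpha r\varphi(r)\partial_r\eta-\lambda^*\psi(r)f'(u^*_\alpha)\eta
\]
must have principal Dirichlet eigenvalue exactly $0$. If the eigenvalue were positive, the implicit function theorem applied at $(\lambda^*,u^*_\alpha)$ would produce classical solutions for $\lambda$ slightly larger than $\lambda^*$, contradicting [GMN, Proposition 1.1]. The semi-stability inequality (2) passes to the limit $\lambda\nearrow\lambda^*$ and already gives eigenvalue $\ge 0$. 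So there is an $\eta\in H_0^1(B)$, $\eta\not\equiv0$, with
\[
\int_B|\nabla\eta|^2\mu=\lambda^*\int_B\eta^2\psi\mu f'(u^*_\alpha).
\]
Since $u^*_\alpha$ is radial and maximal at the origin, and $f'$ is increasing, we have $f'(u^*_\alpha)\le f'(u^*_\alpha(0))$. Hence
\[
\int_B|\nabla\eta|^2\mu\le \lambda^* f'(u^*_\alpha(0))\int_B\eta^2\psi\mu .
\]

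The core of the argument is a weighted Poincaré inequality at the scale $\alpha$: there should be $c_0>0$ such that for $\alpha$ large
\[
\int_B|\nabla\eta|^2\mu\ge c_0\,\alpha\int_B\eta^2\psi\mu\qquad\text{for all }\eta\in H^1_0(B).
\]
Combining this with the display above gives $c_0\alpha\le\lambda^*f'(u^*_\alpha(0))$. By (3), $\lambda^*\le 4K\alpha/\log\alpha$ for $\alpha$ large, so
\[
f'(u^*_\alpha(0))\ge \frac{c_0}{4K}\log\alpha .
\]
Taking $\widetilde C=c_0/(4K)$ and using that $f'$ is nondecreasing yields $u^*_\alpha(0)\ge\tilde A$, so $\tilde c=1$ works. (If $f'$ is not strictly increasing, one interprets $\tilde A$ as the largest solution; this changes nothing.)

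To prove the weighted Poincaré inequality I would use the vector field $F=\nabla\mu=\alpha\varphi(r)\mathbf{x}\,\mu$. Integrating by parts and then applying Cauchy--Schwarz gives
\[
\int_B\eta^2\,\mathrm{div}F=-2\int_B\eta\nabla\eta\cdot F\le\int_B|\nabla\eta|^2\mu+\int_B\eta^2\alpha^2r^2\varphi^2\mu .
\]
Since $\mathrm{div}F=\mu\big(\alpha(2\varphi+r\varphi')+\alpha^2r^2\varphi^2\big)$, the quadratic terms cancel and we obtain
\[
\int_B|\nabla\eta|^2\mu\ge\alpha\int_B\eta^2(2\varphi+r\varphi')\mu .
\]
Because $\varphi(0)=1$ and $\varphi$ is Lipschitz, we have $2\varphi+r\varphi'\ge1$ on some disk $r\le\delta$. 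Together with $\psi\le1$, this handles the contribution near the origin.

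The main obstacle is the outer region $\delta\le r\le 1$. There $\varphi$ may degenerate at $r=1$, and $\psi/\varphi$ is only controlled through the integrability of $M$. The first thing I would try is a modified vector field $F=\alpha g(r)\mathbf{x}\,\mu$, with $g$ chosen so that $\mathrm{div}F\gtrsim\alpha\psi\mu$ while $|F|^2/\mu\lesssim\mathrm{div}F$. Here one uses that $\mu$ is increasing and that $\mu(r)\ge e^{c\alpha\delta^2}\mu(\delta)$ on the outer region, so a tiny amount of $\alpha\int\eta^2\psi\mu$ there can be absorbed. If that fails, I would instead split $\eta$ with a cutoff. The inner piece would be treated by the estimate above. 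The outer piece would be treated by a one-dimensional Hardy inequality in $r$ with weight $r\mu$, exploiting $\mu'/\mu=\alpha r\varphi$ and the integrability of $M$ near $r=1$.
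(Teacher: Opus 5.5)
Your reduction is sound and takes a genuinely different route from the paper's. The paper perturbs $\bar u$ by a positive $v$ and shoots in $\rho$ to produce a solution at $\rho^*\lambda^*>\lambda^*$. You instead use that the linearization at $(\lambda^*,u^*_\alpha)$ must be degenerate. That is a standard implicit-function argument: it should be done in $W^{2,p}\cap W^{1,p}_0$ with $p>2$, because $f$ is only $C^1$, and positivity of the perturbed solutions comes from $u'(1)<0$. Everything then reduces to the uniform weighted Poincar\'e inequality $\int_B|\nabla\eta|^2\mu\ge c_0\alpha\int_B\eta^2\psi\mu$.

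The gap is that this inequality carries the entire quantitative content of the theorem, and you do not prove it. The vector-field identity does not by itself isolate the inner disk. It gives $\int_B|\nabla\eta|^2\mu\ge\alpha\int_B\eta^2(2\varphi+r\varphi')\mu$ over all of $B$, and $2\varphi+r\varphi'$ can be negative wherever $\varphi$ is small and steep. After a cutoff the inner piece is fine. The outer piece, however, is exactly what you leave open: there $\varphi$ may degenerate at $r=1$, and the only information is $\psi\le M\varphi$ with $\int_0^1M<\infty$. You list two strategies for it without carrying out either. So the argument stops at precisely the step the paper spends Lemma 4.2 and Proposition 4.1 on.

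The gap can be closed, most simply by dropping the vector field and proving the one-dimensional inequality on all of $[0,\alpha]$ at once. The weights are radial and $|\nabla\eta|\ge|\partial_r\eta|$, so it suffices to work along rays. In the variable $z=\alpha r^2$, the desired inequality with $c_0=1/B$ reads
\[
\int_0^\alpha\bar\psi\bar\mu\,\bar\eta^{\,2}\,dz\le 4B\int_0^\alpha z\bar\mu\,(\bar\eta')^2\,dz,\qquad \bar\eta(\alpha)=0 .
\]
By Muckenhoupt's weighted Hardy inequality this holds with $B=\sup_{0<t<\alpha}B(t)$, where $B(t)=\bigl(\int_0^t\bar\psi\bar\mu\bigr)\bigl(\int_t^\alpha\frac{dz}{z\bar\mu(z)}\bigr)$. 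The quantity $B$ is bounded uniformly for large $\alpha$ by exactly the inputs of Lemma 4.2:
\begin{itemize}
\item For $t\le1$: $B(t)\le e^{1/2}t\bigl(\log\frac1t+\int_0^\alpha\frac{dz}{\bar\mu(z)}\bigr)$, and $\int_0^\alpha dz/\bar\mu\le5$ by the proof of Lemma 4.2.
\item For $1\le t\le x\alpha$: $\bar\mu'\ge\frac14\bar\mu$ on $[0,2x\alpha]$ gives $\int_0^t\bar\mu\le4\bar\mu(t)$, and the same proof gives $\int_t^\alpha dz/\bar\mu(z)\le5/\bar\mu(t)$. Hence $B(t)\le20/t\le20$.
\item For $t\ge x\alpha$: $\bar\psi\le\overline M\bar\varphi$ and $\bar\mu'=\frac12\bar\varphi\bar\mu$ give $\int_0^t\bar\psi\bar\mu\le2\overline M(t)\bar\mu(t)$, and also $\int_t^\alpha\frac{dz}{z\bar\mu(z)}\le\frac{\alpha-t}{t\bar\mu(t)}$. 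With $\rho=\sqrt{t/\alpha}$ this yields $B(t)\le2M(\rho)(1-\rho^2)/x\le4R/x$, using $M(\rho)(1-\rho)\le\int_\rho^1M\le R$ (monotonicity of $M$).
\end{itemize}

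With this in hand, your proof gives the theorem with $\tilde c=1$. It is shorter and more conceptual than the paper's ODE/shooting argument, at the price of invoking the implicit function theorem and the spectral characterization at $\lambda^*$; the paper's route is elementary and self-contained.

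One small correction: if $f'$ has flat stretches, $\tilde A$ must be the smallest root of $f'(\tilde A)=\widetilde C\log\alpha$, not the largest. Alternatively, shrink $\widetilde C$ slightly so that $f'(u^*_\alpha(0))>\widetilde C\log\alpha$ holds strictly, after which any root works.
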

\noindent In what follows, we refer to $u_\alpha^*(r)$ as just $u(r)$. 
\section{Preliminaries}
Since the extremal solution is radially symmetric, its Laplacian is radial, so the equation in (1) for $\lambda=\lambda^*$ becomes

    \[
        -\frac 1r\frac{\partial}{\partial r}\left(r\frac{\partial u}{\partial r}\right) - \alpha r\varphi(r)\frac{\partial u}{\partial r} = \lambda^*\psi(r)f(u).
    \]
    Now we multiply this by $r\mu(r)$:
    \[
        -\mu(r)\frac{\partial}{\partial r}\left(r\frac{\partial u}{\partial r}\right) - \alpha r\varphi(r)\mu(r)\cdot r\frac{\partial u}{\partial r} = \lambda^*\mu(r)\psi(r)f(u)r.
    \]
    Observe that the left side is equal to
    \[
        -\frac{\partial}{\partial r}\left(\mu(r)\cdot r\frac{\partial u}{\partial r}\right).
    \]
    Integrating from 0 to $r$, then dividing by $r\mu(r)$, we obtain the following problem for $r\in(0,1]$:
    \begin{equation}\label{c}
        \begin{cases}\frac{\partial u}{\partial r} = -\frac {\lambda^*}{r\mu(r)}\int_0^r\mu(s)\psi(s)f(u(s))s\,\mathrm{d}s & r\in (0,1), \\
        u(1) = 0.\end{cases}
    \end{equation}
    To simplify matters, we make the substitution $z = \alpha r^2$ and denote for any function $g$:
    \[\bar g(z) = g\left(\sqrt{\frac z\alpha}\right).\]
    Changing variables appropriately and letting 
    \begin{equation}\label{epsilon}
        F \coloneqq \frac{\lambda^*}{4\alpha} f,
    \end{equation} \eqref{c} becomes
    \begin{equation}\label{d}
        \bar u'(z) = -\frac{1}{z\bar\mu(z)}\int_0^z\bar\mu(s)\bar\psi(s)F(\bar u(s))\mathrm{d}s
    \end{equation}
    with $\bar u(\alpha) = 0$. \\ \\ For radially symmetric $\eta$, (2) becomes
    \begin{equation}
        \int_0^\alpha z\bar\eta'(z)^2\bar\mu(z)\mathrm{d}z\geq \int_0^\alpha\bar\eta(z)^2\bar\psi(z)\bar\mu(z)F'(\bar u(z))\mathrm{d}z.
    \end{equation}
\section{Proof of Theorem 1.1} 
To prove the upper bound, we use (7) along with the behavior of the equation near zero to bound $F(\bar u(0))$ by a constant multiple of $\tilde{u}(0)$, from which the result follows. \\ \\
Note that
\begin{equation}
    \bar\mu(z) = \exp\left(\alpha\int_0^{\sqrt{\frac z{\alpha}}}\varphi(s)s\mathrm{d}s\right) = \exp\left(\frac 12\int_0^z \bar\varphi(s)\mathrm{d}s\right).
\end{equation}
Since $\psi$ is continuous and $\psi(0) = 1$, there exists some $\alpha_1$ big enough such that
\[\psi_0\coloneqq \psi\left(\sqrt{\frac1{\alpha_1}}\right) > 0.\]
Then, since $\psi$ is decreasing, for all $\alpha \geq \alpha_1$ and $z\in[0,1]$ we have
\[ \bar\psi(z)\geq\bar\psi(1)=\psi\left(\sqrt{\frac1\alpha}\right)\geq\psi\Bigg(\sqrt{\frac1{\alpha_1}}\Bigg),
\]
so
\begin{equation}
    0 < \psi_0 \leq \bar\psi(z)\leq 1.
\end{equation}
A few more quick observations about our setup:
\begin{itemize}
    \item From (6), we have that $\bar u' \leq 0$, i.e., $\bar u$ is decreasing. Since $F$ is increasing, this means that $F(\bar u(z))$ is decreasing.
    \item Since $0\leq \bar\varphi(s) \leq 1$, we have from (8) that
    \begin{equation}
        1\leq \bar\mu(s)\leq e^{1/2},
    \end{equation}
    for all $s\in[0,1]$.
\end{itemize}
We will use the semi-stability condition to obtain the following bound:
\begin{lem}
    For all $z\in(0,1]$, we have $\int_0^z F'(\bar u(s))\mathrm{d}s\leq \frac {e^{1/2}}{\psi_0\log 1/z}.$
\end{lem}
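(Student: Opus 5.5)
Insert into the radial semi-stability inequality (7) the logarithmic test function
\[
\bar\eta(s)=\begin{cases}1, & 0\le s\le z,\\[2pt] \dfrac{\log(1/s)}{\log(1/z)}, & z\le s\le 1,\\[4pt] 0, & 1\le s\le \alpha.\end{cases}
\]
This is the standard two-dimensional capacity cutoff: in the variable $z=\alpha r^2$ the weight $z\,\mathrm{d}z$ on $\bar\eta'^2$ plays the role of the planar Dirichlet energy, for which logarithmic profiles are optimal.

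First, check that $\bar\eta$ is admissible. It is Lipschitz in $z$ and vanishes for $z\ge1$, i.e. for $r\ge\alpha^{-1/2}$. The corresponding radial function $\eta(r)=\bar\eta(\alpha r^2)$ is Lipschitz on $B$ and vanishes near $\partial B$, so $\eta\in H^1_0(B)$. Hence (7) applies, and since $\alpha\ge\alpha_1\ge1$ the interval $[0,1]$ lies inside $[0,\alpha]$.

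Second, bound the left side of (7). Only $z<s<1$ contributes, where $\bar\eta'(s)=-\frac{1}{s\log(1/z)}$. Using $\bar\mu\le e^{1/2}$ on $[0,1]$ from (10),
\[
\int_0^\alpha s\,\bar\eta'(s)^2\bar\mu(s)\,\mathrm{d}s\le \frac{e^{1/2}}{\log^2(1/z)}\int_z^1\frac{\mathrm{d}s}{s}=\frac{e^{1/2}}{\log(1/z)}.
\]

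Third, bound the right side of (7) from below. All factors are nonnegative, because $F'\ge0$ ($f$ is increasing and $\lambda^*>0$). So we may discard the contribution from $s>z$ and keep only $[0,z]$, where $\bar\eta\equiv1$. There we use $\bar\psi\ge\psi_0$ from (9) and $\bar\mu\ge1$ from (10), obtaining
\[
\int_0^\alpha\bar\eta^2\bar\psi\bar\mu F'(\bar u)\,\mathrm{d}s\ \ge\ \psi_0\int_0^zF'(\bar u(s))\,\mathrm{d}s.
\]
Combining the two bounds and dividing by $\psi_0$ gives the claim for $z\in(0,1)$. At $z=1$ the right-hand side of the lemma is $+\infty$, so the claim is trivial.

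There is no serious obstacle here. The only real choice is the logarithmic test function, which makes the left side decay like $1/\log(1/z)$ rather than stay bounded; the remaining steps are direct substitutions of the bounds (9) and (10).
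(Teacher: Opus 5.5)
Your proposal is correct and follows the paper's proof essentially verbatim. It inserts the same logarithmic cutoff into $(7)$, bounds the gradient side using $\bar\mu\le e^{1/2}$ on $[0,1]$, and bounds the reaction side by restricting to $[0,z]$ and using $\bar\psi\ge\psi_0$ and $\bar\mu\ge 1$; your added remarks on admissibility, nonnegativity of $F'$, and the trivial case $z=1$ only make explicit what the paper leaves implicit.
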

\begin{proof}
    For each $z\in (0,1)$, define $\bar\eta_z:[0,\alpha]\to[0,1]$ as follows:
    \[\bar\eta_z(s) = \begin{cases}
        1 & 0\leq s\leq z, \\
        \frac{\log 1/s}{\log 1/z} & z<s\leq 1, \\
        0 & s > 1.
    \end{cases}\]
    One can verify that $\eta_z(r)=\bar\eta_z(\alpha r^2)\in H_0^1(B)$. For this $\bar\eta_z$ we have
    \begin{equation}
        \int_0^\alpha \bar\eta_z(s)^2\bar\psi(s)\bar\mu(s)F'(\bar u(s))\mathrm{d}s\geq \int_0^z\bar\psi(s)\bar\mu(s)F'(\bar u(s))\mathrm{d}s\geq \psi_0\int_0^z F'(\bar u(s))\mathrm{d}s
    \end{equation}
    and
    \begin{equation}
        \int_0^\alpha s\bar\eta_z'(s)^2\bar\mu(s)\mathrm{d}s\leq e^{1/2}\int_z^1 s\bar\eta_z'(s)^2\mathrm{d}s.
    \end{equation}
    For $s\in[z,1]$, we have:
    \[\bar\eta_z'(s) = \frac 1{\log \frac 1z}\cdot \frac{-1}s,\]
    so
    \[e^{1/2}\int_z^1 s\bar\eta_z'(s)^2\mathrm{d}s = \frac {e^{1/2}}{\left(\log\frac 1z\right)^2}\int_z^1 \frac{\mathrm{d}s}s = \frac {e^{1/2}}{\left(\log\frac 1z\right)^2}[\log 1 - \log z] = \frac{e^{1/2}}{\log\frac 1z}.\]
    Combining inequalities (7), (11), and (12) yields
    \[\psi_0\int_0^z F'(\bar u(s))\mathrm{d}s \leq \frac{e^{1/2}}{\log \frac 1z},\]
    as desired.
\end{proof}
\begin{lem}
    For all $z\in(0,1]$, we have $-\bar u'(z)\geq \psi_0e^{-1/2}F(\bar u(z))$.
\end{lem}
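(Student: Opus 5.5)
We start from the integral form (6) of the equation,
\[
-\bar u'(z)=\frac{1}{z\bar\mu(z)}\int_0^z\bar\mu(s)\bar\psi(s)F(\bar u(s))\,\mathrm{d}s ,
\]
and bound the integrand from below pointwise on $[0,z]$. Three facts recorded just before Lemma 3.1 suffice, and they all apply because $z\in(0,1]$ forces $s\in[0,1]$.

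First, $\bar\mu(s)\geq 1$ by (10). Second, $\bar\psi(s)\geq\psi_0$ by (9), which holds for $\alpha\geq\alpha_1$; this is the standing assumption in this section. Third, $\bar u$ is decreasing and $F$ is increasing, so $F(\bar u(s))\geq F(\bar u(z))$ for all $s\leq z$.

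Since every factor is nonnegative, we get
\[
\int_0^z\bar\mu(s)\bar\psi(s)F(\bar u(s))\,\mathrm{d}s\;\geq\;\psi_0\,z\,F(\bar u(z)).
\]

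For the prefactor we use the upper bound $\bar\mu(z)\leq e^{1/2}$ from (10), which gives $\frac{1}{z\bar\mu(z)}\geq\frac{e^{-1/2}}{z}$. Multiplying the two bounds, the factor $z$ cancels and we obtain
\[
-\bar u'(z)\geq\psi_0e^{-1/2}F(\bar u(z)).
\]

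The argument has no real obstacle. The only points to check are the direction of each inequality (a lower bound on $\bar\mu$ inside the integral and an upper bound on $\bar\mu$ in the denominator) and that the monotonicity of $F(\bar u(\cdot))$ is used in the right direction: $s\leq z$ gives $F(\bar u(s))\geq F(\bar u(z))$. One should also note that $z=1$ is allowed, since the bounds (9) and (10) hold on the closed interval $[0,1]$.
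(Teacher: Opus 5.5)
Your proposal is correct and matches the paper's proof: you bound $\bar\psi(s)\bar\mu(s)F(\bar u(s))\geq\psi_0F(\bar u(z))$ pointwise on $[0,z]$ using (9), (10) and the monotonicity of $F(\bar u(\cdot))$, then combine this with $1/\bar\mu(z)\geq e^{-1/2}$ in (6). Your remark that the bound needs the standing assumption $\alpha\geq\alpha_1$ from (9) is a fair point that the paper leaves implicit.
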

\begin{proof}
    Combining (9) and (10) with the decreasing property of $F(\bar u(z))$, we get for all $s\in[0,z]$ that
    \[\bar\psi(s)\bar\mu(s)F(\bar u(s)) \geq \psi_0F(\bar u(z)).\]
    Noting also that
    \[\frac 1{\bar\mu(z)}\geq e^{-1/2},\]
    we see that
    \[-\bar u'(z) \geq \frac {e^{-1/2}}z\int_0^z \psi_0F(\bar u(z))\mathrm{d}s = \psi_0e^{-1/2}F(\bar u(z)).\]
\end{proof}
\begin{lem}
    For all $z\in(0,1]$, we have $-\bar u'(z)\leq e^{1/2}F(\bar u(0))$.
\end{lem}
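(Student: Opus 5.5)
The plan is to bound each factor of the integral formula (6) from above, in the same way Lemma 3.2 bounded it from below. For $z\in(0,1]$, equation (6) gives
\[
-\bar u'(z) = \frac{1}{z\bar\mu(z)}\int_0^z\bar\mu(s)\bar\psi(s)F(\bar u(s))\,\mathrm{d}s .
\]
We treat the prefactor and the integrand separately.

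For the prefactor, (10) gives $\bar\mu(z)\ge 1$, so $1/\bar\mu(z)\le 1$.

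For the integrand, take $s\in[0,z]\subset[0,1]$. We use three facts:
\begin{itemize}
\item (10) gives $\bar\mu(s)\le e^{1/2}$;
\item (9) gives $\bar\psi(s)\le 1$, since $\psi$ is decreasing with $\psi(0)=1$;
\item $\bar u$ is decreasing and $F$ is increasing (the observations following (9)), so $F(\bar u(s))\le F(\bar u(0))$.
\end{itemize}
Together these give $\bar\mu(s)\bar\psi(s)F(\bar u(s))\le e^{1/2}F(\bar u(0))$.

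Integrating this over $[0,z]$ yields at most $z\,e^{1/2}F(\bar u(0))$. Dividing by $z\bar\mu(z)\ge z$ gives $-\bar u'(z)\le e^{1/2}F(\bar u(0))$, which is the claim.

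There is no real obstacle here. The only points to check are that $F(\bar u(0))$ is finite, which holds because $u^*_\alpha$ is classical by [GMN, Proposition 1.1], and that the bounds in (10) and (9) are used only on $[0,1]$. This is the case, since $s\le z\le 1$.
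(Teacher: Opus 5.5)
Your proposal is correct and follows the paper's proof essentially step for step. You bound the integrand $\bar\psi(s)\bar\mu(s)F(\bar u(s))$ by $e^{1/2}F(\bar u(0))$ on $[0,z]$ using (9), (10) and the monotonicity of $F(\bar u)$, and then use $1/\bar\mu(z)\le 1$ for the prefactor.
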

\begin{proof}
    Similarly, combining (9) and (10) with the decreasing property of $F(u(z))$, we get for all $s\in[0,z]$ that
    \[\bar\psi(s)\bar\mu(s)F(\bar u(s)) \leq e^{1/2}F(\bar u(0)).\]
    Noting also that
    \[\frac 1{\bar\mu(z)}\leq 1,\]
    we see that
    \[-\bar u'(z)\leq \frac 1z\int_0^z e^{1/2}F(\bar u(0))\textrm{d}s = e^{1/2}F(\bar u(0)).\]
\end{proof}
    \begin{lem}
        For all $z\in(0,1]$, we have $F(\bar u(z))\geq F(\bar u(0))\left[1 - \frac e{\psi_0\log 1/z}\right]$.
    \end{lem}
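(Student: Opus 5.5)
We want $F(\bar u(0)) - F(\bar u(z)) \le F(\bar u(0))\,\frac{e}{\psi_0\log 1/z}$. The idea is to write the difference of $F$ values as an integral of $F'(\bar u(s))$ against $-\bar u'(s)$. Lemma 3.3 bounds the derivative factor uniformly, and Lemma 3.1 controls the integral of $F'(\bar u)$.

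\textbf{Representation.} By the chain rule, and since $\bar u$ is decreasing and $C^1$ on $(0,1]$ (continuous at $0$),
\[
F(\bar u(0)) - F(\bar u(z)) = -\int_0^z \frac{\mathrm d}{\mathrm ds}F(\bar u(s))\,\mathrm ds = \int_0^z F'(\bar u(s))\,\bigl(-\bar u'(s)\bigr)\,\mathrm ds .
\]
A small point of care is the endpoint $s=0$, where (6) has the $1/z$ factor. I would integrate over $[\delta,z]$ and let $\delta\to0$. This is justified because $-\bar u'$ is bounded near $0$ by Lemma 3.3, and $\bar u$ is continuous at $0$ since $u$ is classical.

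\textbf{Bounding the derivative factor.} Since $F'\ge 0$ (as $f$ is increasing), Lemma 3.3 gives $-\bar u'(s)\le e^{1/2}F(\bar u(0))$ for every $s\in(0,z]$. Pulling this constant out of the integral yields
\[
F(\bar u(0)) - F(\bar u(z)) \le e^{1/2}F(\bar u(0))\int_0^z F'(\bar u(s))\,\mathrm ds .
\]

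\textbf{Applying the semi-stability bound.} Lemma 3.1 bounds the remaining integral by $\frac{e^{1/2}}{\psi_0\log 1/z}$. This gives
\[
F(\bar u(0)) - F(\bar u(z)) \le \frac{e}{\psi_0\log 1/z}\,F(\bar u(0)),
\]
and rearranging is exactly the claim.

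\textbf{Endpoint $z=1$.} Here $\log 1/z=0$, so the right-hand side of the claim is $-\infty$ and the inequality is trivially true. Alternatively, I would simply restrict to $z\in(0,1)$.

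There is no real obstacle in this argument. The only point needing care is the justification of the fundamental theorem of calculus near $s=0$, which the limiting argument above handles.
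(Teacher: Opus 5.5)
Your proof is correct and is essentially the paper's argument: the paper likewise bounds $-\bar u'(s)$ by $e^{1/2}F(\bar u(0))$ via Lemma 3.3, multiplies by $F'(\bar u(s))\ge 0$, integrates over $[0,z]$ using the fundamental theorem of calculus, and closes with Lemma 3.1. Your extra care about the endpoint $s=0$ and the degenerate case $z=1$ is a harmless refinement that the paper omits.
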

    \begin{proof}
        Dividing both sides of the inequality in Lemma 3.2 by $e^{1/2}F(\bar u(0))>0$, we get
        \begin{align*}
            \frac{-\bar u'(s)}{e^{1/2}F(\bar u(0))}\leq 1,\hspace{.5cm} s\in[0,z].
        \end{align*}
        Hence
        \begin{equation}
            \frac {-1}{e^{1/2}F(\bar u(0))}\int_0^zF'(\bar u(s))\bar u'(s)\mathrm{d}s \leq \int_0^z F'(\bar u(s))\mathrm{d}s.
        \end{equation}
        Using the Fundamental Theorem of Calculus, we see that
        \[\frac {-1}{e^{1/2}F(\bar u(0))}\int_0^zF'(\bar u(s))\bar u'(s)\mathrm{d}s =  \frac{-1}{e^{1/2}F(\bar u(0))}[F(\bar u(z))-F(\bar u(0))].\]
        From (13), combined with Lemma 3.1, this means
        \begin{align*}
            \frac{-1}{e^{1/2}F(\bar u(0))}[F(\bar u(z))-F(\bar u(0))] &\leq \frac{e^{1/2}}{\psi_0\log\frac 1z} \\
            \implies
            F(\bar u(z)) - F(\bar u(0)) &\geq \frac{-eF(\bar u(0))}{\psi_0\log\frac 1z} \\
            \implies F(\bar u(z)) &\geq F(\bar u(0))\left[1 - \frac e{\psi_0\log \frac 1z}\right].
        \end{align*}
    \end{proof}
    \begin{proof}[Proof of Theorem 1]
    Integrating the inequality in Lemma 3.3 from 0 to $z$ gives
    \[\bar u(z)\leq \bar u(0) - \frac{\psi_0}{e^{1/2}}\int_0^{z} F(\bar u(s))\mathrm{d}s.\]
    From Lemma 3.4 we have
    \begin{align*}
        -\frac{\psi_0}{e^{1/2}}\int_0^{z} F(\bar u(s))\mathrm{d}s &\leq -\frac{\psi_0F(\bar u(0))}{e^{1/2}}\int_0^{z}\left(1 - \frac e{\psi_0\log\frac 1s}\right)\mathrm{d}s.
    \end{align*}
    Note that
    \[\lim_{s\to 0^+}\frac{e}{\psi_0\log\frac 1s} = 0,\]
    meaning $z$ can be chosen sufficiently small so that for all $s\in(0,z)$, we have
    \[\frac{e}{\psi_0\log\frac 1s} < 1.\]
    For this $z$, we obtain
    \[\bar u(z) \leq \bar u(0) - I\cdot F(\bar u(0)),\]
    where
    \[I \coloneqq \frac{\psi_0}{e^{1/2}}\int_0^{z}\left(1-\frac e{\psi_0\log\frac 1s}\right)\mathrm{d}s > 0.\]
    Since $\bar u(z)\geq 0$, this means
    \[\bar u(0) - I\cdot F(\bar u(0))\geq 0\implies F(\bar u(0)) \leq \frac{\bar u(0)}I.\]
    By the Fundamental Theorem of Calculus and the convexity of $F$, we have:
    \[F(\bar u(0)) = F\left(\frac{\bar u(0)}2\right) + \int_{\frac{\bar u(0)}2}^{\bar u(0)}F'(s)\mathrm{d}s\geq \int_{\frac{\bar u(0)}2}^{\bar u(0)}F'\left(\frac{\bar u(0)}2\right)\mathrm{d}s = \frac{\bar u(0)}2F'\left(\frac{\bar u(0)}2\right),\]
    so
    \[F'\left(\frac{\bar u(0)}2\right) \leq \frac 2I.\]
    Substituting back \eqref{epsilon} and observing that $\bar u(0) = u(0)$ yields
    \[f'\left(\frac{u(0)}2\right)\leq \frac 8I\cdot \frac \alpha{\lambda^*}.\]
    By (4), one can find $\alpha_2$ such that for all $\alpha\geq\alpha_2$,
    \[\frac{\lambda^*}{\alpha}\geq \frac 12\cdot \frac{2K}{\log\alpha} = \frac K{\log\alpha},\]
    from which we obtain, for all $\alpha\geq\alpha_0= \max\{\alpha_1, \alpha_2\}$,
    \[f'\left(\frac{u(0)}2\right)\leq\frac8{IK}\log\alpha.\]
    So, if $A$ is such that
    \[f'(A)=\frac8{IK}\log\alpha,\]
    then since $f'$ is increasing (since $f$ is convex),
    \[u_\alpha^*(0)\leq2A,\]
    as desired.
    \end{proof}
    \section{Proof of Theorem 1.2}
    The approach for the lower bound involves perturbing a solution to \eqref{d} by some small $v$ such that $v(0) > 0$ and $w_1\coloneqq \bar{u}+v$ still satisfies the equation in \eqref{d}. Assuming Theorem 1.2 is false, we can find an $\alpha$ and corresponding $v$ satisfying $v(z) > 0$ for all $z\in[0,\alpha]$. Then $w_1$ can be used to show the existence of a solution corresponding to a value of $\lambda$ strictly larger than $\lambda^*$, contradicting the extremality of $\lambda^*$. \\ \\ 
    Replacing $\Bar{u}$ with $\bar{u}+v$ in \eqref{d} and rearranging, we obtain the following relation for $v$:
    \begin{equation*}
        v'(z) = -\frac 1{z\bar \mu(z)}\int_0^z \bar\psi(s)\bar\mu(s)[F((\bar u+v)(s))-F(\bar u(s))]\mathrm{d}s.
    \end{equation*}
    To avoid domain issues, we extend the domain of $f$ and $F$ to all of $\mathbb R$ by defining
    \begin{align*}
        \tilde f(t) &= \begin{cases}f(t) & t\geq 0, \\ f(0) & t < 0,\end{cases}
    \end{align*}
    and defining $\widetilde F$ similarly. First, we will need the following result concerning positivity of $v$:
    \begin{prop}
        If Theorem 1.2 is false, then for sufficiently small $v_0>0$  there exists $\alpha$ such that the problem
        \begin{equation}\label{e}
        \begin{cases}v'(z) = -\frac 1{z\bar \mu(z)}\int_0^z \bar\psi(s)\bar\mu(s)[\widetilde F((\bar u+v)(s))-\widetilde F(\bar u(s))]\mathrm{d}s & z\in(0,\alpha), \\ v(0) = v_0, \end{cases}
        \end{equation}
        where $\bar{u}$ solves \eqref{d}, has a solution $v$ with $v(z) > 0$ for all $z\in[0,\alpha]$.
    \end{prop}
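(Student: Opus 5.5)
The plan is to argue by contraposition, through a Sturm-type comparison for the (essentially linear) equation satisfied by $v$. The smallness of $F'$ comes from the negation of Theorem 1.2, and the behaviour of $F'$ where $\bar u$ is small is handled by the semi-stability inequality (7).

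\emph{Step 1: what the negation of Theorem 1.2 gives.} If Theorem 1.2 fails, then for every choice of $\tilde c,\widetilde C,\tilde\alpha_0$ there is $\alpha\ge\tilde\alpha_0$ with $u(0)<\tilde c\tilde A$, where $f'(\tilde A)=\widetilde C\log\alpha$. Take $\tilde c=\tfrac12$ and $\widetilde C$ small, to be fixed later. Since $\tilde A\to\infty$ as $\alpha\to\infty$, for $0<v_0\le\tilde A/2$ we get $u(0)+v_0\le\tilde A$. Monotonicity of $f'$ then gives
\[
\sup_{0\le t\le \bar u(0)+v_0} F'(t)\le \frac{\lambda^*}{4\alpha}\,\widetilde C\log\alpha\le K\widetilde C\eqqcolon\delta ,
\]
using the asymptotics (4). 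Thus the linearization of $\widetilde F$ along $\bar u$, and along $\bar u+v$ while $0\le v\le v_0$, is uniformly $O(\widetilde C)$.

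\emph{Step 2: a priori control while $v>0$.} Let $z_1$ be the first zero of $v$, if there is one. On $[0,z_1)$ the integrand in \eqref{e} is nonnegative, so $v$ is decreasing and $0<v\le v_0$. By convexity,
\[
0\le \widetilde F(\bar u+v)-\widetilde F(\bar u)\le F'(\bar u+v)\,v\le F'(\bar u+v_0)\,v ,
\]
so $v$ is a supersolution-type comparison function for the linear problem
\[
(z\bar\mu w')'+\bar\psi\bar\mu\,q\,w=0,\qquad q(z)=F'\big(\bar u(z)+v_0\big).
\]
It therefore suffices to show that this linear problem has a solution with $w(0)=1$, $w'(0)=0$ that stays positive on $[0,\alpha]$. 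Indeed, comparison of \eqref{e} with this linear equation, via the integral form and Gronwall on $v/w$, then gives $v\ge v_0 w>0$ on $[0,\alpha]$.

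\emph{Step 3: positivity of $w$ by Sturm comparison.} Positivity of $w$ on $[0,\alpha]$ is equivalent to strict positivity of the quadratic form
\[
Q_q(\eta)=\int_0^\alpha z\eta'^2\bar\mu-\int_0^\alpha\bar\psi\bar\mu q\,\eta^2
\]
on functions vanishing at $z=\alpha$. For $q_0=F'(\bar u)$, (7) says exactly that $Q_{q_0}\ge0$. The task is to absorb the perturbation $q-q_0\le F''$-type increments. I would split $[0,\alpha]$ into $[0,1]$ and $[1,\alpha]$.

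On $[0,1]$, Step 1 gives $q\le\delta$. The estimate of Lemma 3.1, namely $\int_0^z q\,ds\lesssim\delta z$, combined with the logarithmic cutoff $\bar\eta_z$, gives a strict margin of size $1-O(\delta)$ there.

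On $[1,\alpha]$, where $\bar u\to0$ by [GMN, Theorem 1.2], $q$ is close to $F'(\bar u)$. There I would use (7) for $\bar u$ itself, rescaled by a factor slightly larger than $1$, which is harmless once $v_0$ is small because $F'$ is continuous at $0$.

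\emph{Main obstacle.} The main obstacle is this last region. A crude bound only gives $\int_1^\alpha \frac{\bar\psi}{z\bar\varphi}F'\,dz=O(f'(0)K)$, which is order one, not small. So positivity there cannot come from smallness and must come from the semi-stability (7) itself, made strict. My first attempt would be to exploit the fact that the negation of Theorem 1.2 makes $\bar u(0)$ comparatively small. Then the mass of $F'(\bar u)$ on $[0,1]$ is $O(\delta)$, which leaves a gap $\gtrsim 1$ in (7) for test functions concentrated near the origin. I would then transfer that gap to a strict inequality for $Q_q$ by continuity in $v_0$, choosing $v_0$ small depending on $\alpha$.
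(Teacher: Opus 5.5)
There is a genuine gap in your Step 3, on $[1,\alpha]$. Steps 1 and 2 are sound. Step 1 is exactly the paper's use of the negation, and it already gives $q(z)=F'(\bar u(z)+v_0)\le F'(\bar u(0)+v_0)\le\delta$ on \emph{all} of $[0,\alpha]$, since $\bar u$ is decreasing. The Wronskian/Sturm reduction to the linear problem is a legitimate reformulation.

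The gap comes from misdiagnosing the ``main obstacle''. The quantity $\int_1^\alpha \frac{\bar\psi}{z\bar\varphi}F'\,dz$ is essentially the constant produced by the crude pointwise bound $\eta(z)^2\le \bigl(\int_z^\alpha \frac{ds}{s\bar\mu(s)}\bigr)\int_0^\alpha s\bar\mu\eta'^2$, integrated against $\bar\psi\bar\mu q$. That bound loses a factor of order $\log\alpha$, which is why you get $O(1)$. The sharp Hardy/Muckenhoupt-type criterion is instead the supremum over $t$ of $\bigl(\int_0^t\bar\psi\bar\mu q\bigr)\bigl(\int_t^\alpha\frac{ds}{s\bar\mu}\bigr)$. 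With $q\le\delta$ this is $O(\delta)$ \emph{uniformly in $\alpha$}.

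That uniform weighted estimate is the missing idea. In the paper it is Lemma 4.2: $\int_0^t\kappa(s)\,ds\le kt$, where $\kappa(s)=\bar\psi(s)\bar\mu(s)\int_s^\alpha\frac{dz}{\bar\mu(z)}$ and $k$ is independent of $\alpha$. It is proved from the exponential growth of $\bar\mu$ where $\bar\varphi\ge\frac12$, together with the hypothesis $\int_0^1M<\infty$, which your proposal never uses.

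With Lemma 4.2 the paper needs no semi-stability. At a first zero $z_0$ of $v$ one has $-v'(z)\le\frac{\delta}{z\bar\mu(z)}\int_0^z\bar\psi\bar\mu v$. Hence $\int_0^{z_0}v=-\int_0^{z_0}zv'\le\delta\int_0^{z_0}\kappa v$. Writing $v(s)=\int_s^{z_0}(-v'(t))\,dt$ and applying the lemma gives $\int_0^{z_0}\kappa v\le k\int_0^{z_0}t(-v'(t))\,dt=k\int_0^{z_0}v$, a contradiction once $\delta<1/k$. In your framework the same lemma implies $\bigl(\int_0^t\bar\psi\bar\mu\bigr)\bigl(\int_t^\alpha\frac{ds}{s\bar\mu}\bigr)\le\frac1t\int_0^t\kappa\le k$. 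Muckenhoupt's criterion then gives $Q_q(\eta)\ge(1-4k\delta)\int_0^\alpha z\bar\mu\eta'^2$ for $\eta(\alpha)=0$, so Step 3 would close for $\delta<1/(4k)$.

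The remedy you propose instead would not work. To absorb $q-F'(\bar u)\ge0$ via (7), you need (7) improved by a factor $1+\epsilon$, and nothing provides that. Because $u^*_\alpha$ is classical, a spectral gap in (7) at $\lambda^*$ would let the implicit function theorem continue the branch past $\lambda^*$. So for the genuine extremal solution the form $Q_{F'(\bar u)}$ is degenerate, while $Q_q\le Q_{F'(\bar u)}$.

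A gap for test functions concentrated near the origin says nothing about test functions supported in $[1,\alpha]$. Continuity in $v_0$ can preserve a gap but cannot create one. Strict positivity has to come from using the (contradictory) smallness hypothesis quantitatively on the whole interval, and that is exactly what the $\alpha$-uniform Hardy bound does.
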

    To prove this, we use the following lemma, whose purpose will become apparent.
    \begin{lem}
    Define
    \[\kappa(s) \coloneqq \bar\psi(s)\bar\mu(s)\int_s^{\alpha}\frac{\mathrm{d}z}{\bar\mu(z)}.\]
    Then there exists $k$ independent of $\alpha$ such that for all sufficiently large $\alpha$,
    \[\int_0^t\kappa(s)\mathrm{d}s \leq kt\]
    for all $t\in[0,\alpha]$.
\end{lem}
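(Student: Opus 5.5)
The plan is to show that $\kappa$ is bounded by a constant independent of $\alpha$ on all of $[0,\alpha]$. The claim then follows with $k=\sup\kappa$, after integrating from $0$ to $t$. Since $0\le\bar\psi\le 1$, we have $\kappa(s)\le \bar\mu(s)\int_s^\alpha \frac{\mathrm{d}z}{\bar\mu(z)}$. By (8) this equals
\[
\int_s^\alpha \exp\Big(-\tfrac12\int_s^z\bar\varphi(\tau)\,\mathrm{d}\tau\Big)\mathrm{d}z ,
\]
so everything reduces to lower bounds on $\bar\varphi$.

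The difficulty is that $\varphi$ may vanish at $r=1$, which makes $\bar\varphi$ small near $z=\alpha$. The integrability of $M$ is the hypothesis meant to control this. I will therefore keep $\bar\psi$ rather than discarding it, using $\bar\psi(s)\le M(\sqrt{s/\alpha})\,\bar\varphi(s)$. I split $[0,\alpha]$ at $z=\alpha/4$, i.e.\ at $r=1/2$.

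\textbf{First region, $s\le \alpha/4$.} Here $\bar\varphi\ge \varphi(1/2)=:\varphi_1>0$ on $[s,\alpha/4]$, since $\varphi$ is decreasing and positive on $[0,1)$. The part of the $z$-integral over $[s,\alpha/4]$ is therefore at most $\int_0^\infty e^{-\varphi_1 w/2}\,\mathrm{d}w=2/\varphi_1$. For $z\in[\alpha/4,\alpha]$ the exponent is at most $-\tfrac12\varphi_1(\alpha/4-s)$, but the length of this interval is of order $\alpha$. So this piece gives $\kappa(s)\lesssim \alpha e^{-\varphi_1(\alpha/4-s)/2}$, which is not uniformly bounded when $s$ is close to $\alpha/4$. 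Rather than bounding $\kappa$ pointwise, I will bound $\int_0^t\kappa$ directly. The contribution of this exponential term to $\int_0^{\min(t,\alpha/4)}\kappa$ is at most $\alpha\cdot 2/\varphi_1$, hence $O(\alpha)$, and it is concentrated at $s\approx\alpha/4$. Since the claim is required for all $t$, small $t$ must still be handled. For $t\le\alpha/8$ the term is at most $t\,\alpha e^{-\varphi_1\alpha/16}$, which is at most $t$ for large $\alpha$. For $t\ge\alpha/8$ the total bound $O(\alpha)$ is $O(t)$.

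\textbf{Second region, $s\ge\alpha/4$.} I write $\kappa(s)\le M(\sqrt{s/\alpha})\,\bar\varphi(s)\,\bar\mu(s)\int_s^\alpha\bar\mu^{-1}$ and estimate $\bar\varphi(s)\bar\mu(s)\int_s^\alpha\bar\mu^{-1}$. The natural tool is the identity $\bar\mu'=\tfrac12\bar\varphi\bar\mu$. Because $\bar\varphi$ is decreasing, for $z\ge s$ we have $\bar\varphi(s)\ge\bar\varphi(z)$, and this gives
\[
\bar\varphi(s)\int_s^\alpha\frac{\mathrm{d}z}{\bar\mu(z)}\le \int_s^\alpha\frac{\mathrm{d}z}{\bar\mu(z)}\,\bar\varphi(s) .
\]
That comparison runs in the wrong direction to exploit $\bar\mu'$ directly, so I expect this step to be the main obstacle. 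My first attempt will be to bound crudely by $\alpha-s$ wherever $\bar\varphi$ is small, and otherwise use the exponential decay. I then integrate: $\int_{\alpha/4}^t M(\sqrt{s/\alpha})\,(\cdots)\,\mathrm{d}s$. The substitution $s=\alpha r^2$ turns this into $\alpha\int M(r)\,r\,\mathrm{d}r\cdot(\cdots)$, which is $O(\alpha)=O(t)$ because $\int_0^1 M<\infty$. Collecting the two regions then gives $k$ independent of $\alpha$.
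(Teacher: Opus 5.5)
Your first region is fine and matches the paper. The paper splits at $2x\alpha$ with $\varphi(\sqrt{2x})\ge\frac12$ to get $\kappa\le5$ on $[0,x\alpha]$.

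The gap is the second region. It is the only place where $\int_0^1M<\infty$ enters, and you leave it as ``the main obstacle''. Your sketch needs $\bar\varphi(s)\bar\mu(s)\int_s^\alpha\bar\mu^{-1}$ to be bounded uniformly in $s$ and $\alpha$, so that it can be pulled out of $\int_{\alpha/4}^t\overline M(s)(\cdots)\,\mathrm ds$. This is false for general admissible $\varphi$, and so is your opening claim that $\kappa$ is uniformly bounded. Take $\psi=\varphi$, so $M\equiv1$ and your factor $(\cdots)$ is exactly $\kappa$. Let $\varphi$ be a staircase near $r=1$: on $[1-2^{-n},1-2^{-n-1}]$ it equals $a_n=2^{-2^n}$, except on the last $a_n-a_{n+1}$ of that interval, where it drops with slope $-1$ to $a_{n+1}=a_n^2$. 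Let $r_n$ be the start of the $n$-th drop, and set $\alpha=a_n^{-2}$, $s=\alpha r_n^2$. Then $\alpha\int_{r_n}^\rho\varphi\le2$ for $\rho$ on the next plateau, so
\[
\kappa(s)=\varphi(r_n)\int_{r_n}^1\exp\Bigl(-\alpha\int_{r_n}^\rho\varphi(\tau)\tau\,\mathrm d\tau\Bigr)2\alpha\rho\,\mathrm d\rho\ \ge\ e^{-2}\,2^{-n-2}a_n^{-1}\to\infty .
\]
So no pointwise estimate of the type you describe can close the argument (``$\alpha-s$ where $\bar\varphi$ is small, exponential decay otherwise''). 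The smallness of $\bar\varphi$ only pays off after integrating in $s$.

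The missing idea is to integrate $\kappa$ over all of $[0,\alpha]$ and swap the order (Tonelli):
\[
\int_0^\alpha\kappa=\int_0^\alpha\bar\mu(z)^{-1}\int_0^z\bar\psi(s)\bar\mu(s)\,\mathrm ds\,\mathrm dz .
\]
Then use that $M$ is a running maximum, so $\bar\psi(s)\le\overline M(z)\bar\varphi(s)$ for all $s\le z$. In other words, evaluate $M$ at the outer variable $z$, not at $s$. Now $\bar\varphi$ sits next to $\bar\mu(s)$ rather than $\bar\mu(z)^{-1}$, which is what fixes the ``wrong direction'' you noticed. The identity $\bar\mu'=\frac12\bar\varphi\bar\mu$ gives $\int_0^z\bar\varphi\bar\mu=2(\bar\mu(z)-1)\le2\bar\mu(z)$, which cancels the outer $\bar\mu(z)^{-1}$. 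Hence
\[
\int_0^\alpha\kappa\le2\int_0^\alpha\overline M=4\alpha\int_0^1rM(r)\,\mathrm dr .
\]
This is the paper's route, and it makes your second region unnecessary. For $t\le\alpha/8$, use your pointwise bound $\kappa\le2/\varphi_1+1$. For $t\ge\alpha/8$, use $\int_0^t\kappa\le\int_0^\alpha\kappa=O(\alpha)=O(t)$.
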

\begin{proof}
    Define
    \begin{align*}
        R\coloneqq \int_0^1M(s)\mathrm{d}s.
    \end{align*}
    Then, first observe that
    \begin{align*}
        \int_0^\alpha \kappa(s)\mathrm{d}s &= \int_0^\alpha \bar\psi(s)\bar\mu(s)\int_s^\alpha\frac{1}{\bar\mu(z)}\mathrm{d}z\mathrm{d}s \\
        &\leq \int_{0}^\alpha\int_{0}^z\bar\varphi(s)\bar\mu(s)\mathrm{d}s\frac{\overline M(z)}{\bar\mu(z)}\mathrm{d}z \\
        &=2\int_0^\alpha\frac{\bar\mu(z)-\bar\mu(0)}{\bar\mu(z)}\overline M(z)\mathrm{d}z\\
        &\leq 2\int_0^\alpha \overline M(z)\mathrm{d}z \\
        &= 2\int_0^\alpha M\biggl(\sqrt{\frac z\alpha}\biggr)\mathrm{d}z.
    \end{align*}
    Making the substitution $u = \sqrt{z/\alpha}$, the above quantity becomes
    \begin{align*}
        2\alpha\int_0^1uM(u)\textrm du\leq 2\alpha\int_0^1 M(u)\textrm{d}u = 2R\alpha,
    \end{align*}
    so
    \[\int_0^\alpha\kappa(s)\mathrm{d}s \leq 2R\alpha.\]
    Now we pick $0<x<\frac12$ such that $\varphi(\sqrt{2x})=\bar\varphi(2x\alpha)\geq \frac 12$. Then for all $s\leq x\alpha$,
    \begin{align*}
        \kappa(s) &\leq \int_s^{\alpha}\frac{\bar\mu(s)}{\bar\mu(z)}\mathrm{d}z =\left(\int_s^{2x\alpha}+\int_{2x\alpha}^\alpha\right) \frac{\mathrm{d}z}{\exp\left(\frac 12\int_s^z \bar\varphi(u)du\right)}.
    \end{align*}
    Since $\bar\varphi(u)\geq \frac 12$ when $u<2x\alpha$, we can obtain a constant upper bound on the first integral:
    \begin{align*}
        \int_s^{2x\alpha}\frac{\mathrm{d}z}{\exp\left(\frac 12\int_s^z \bar\varphi(u)du\right)} \leq \int_s^{2x\alpha}\frac{\mathrm{d}z}{e^{\frac 14(z-s)}} \leq \int_0^{\infty}\frac{\mathrm{d}z}{e^{z/4}} = 4.
    \end{align*}
    We can obtain a exponentially small bound on the second integral:
    \[\int_{2x\alpha}^\alpha\frac{\mathrm{d}z}{\exp(\frac 12\int_s^z\bar\varphi(u)du)} \leq \frac 1{\exp(\frac 12\int_{x\alpha}^{2x\alpha}\bar\varphi(u)du)}\int_{2x\alpha}^\alpha\frac{\mathrm{d}z}{\exp(\frac 12\int_{2x\alpha}^z\bar\varphi(u)du)}\leq \frac 1{e^{x\alpha/4}}\int_{2x\alpha}^\alpha \mathrm{d}z = \frac{\alpha(1-2x)}{e^{x\alpha/4}}.\]
    Let $\alpha_3$ be such that \[\frac{\alpha(1-2x)}{e^{x\alpha/4}}\leq 1\] for all $\alpha \geq \alpha_3$. Then for such $\alpha$ we obtain
    \[\kappa(s)\leq 5\]
    for all $s\leq x\alpha$, meaning for all $t\leq x\alpha$,
    \[\int_0^t\kappa(s)\mathrm{d}s\leq 5t.\]
    For all $t\geq x\alpha$,
    \[\int_0^t\kappa(s) \mathrm{d}s \leq \int_0^\alpha \kappa(s)\mathrm{d}s \leq 2R\alpha = \frac {2R}x\cdot x\alpha\leq \frac {2R}xt.\]
    Setting $k = \max\left\{5, \frac {2R}x\right\}$ finishes the lemma.
\end{proof}
\begin{proof}[Proof of Proposition 4.1]
    Assuming Theorem 1.2 is false, in particular for $\tilde c=\frac 12$, means that for any $\widetilde C > 0$, there exist arbitrarily large $\alpha$ satisfying
    \[f'(2u(0)) < \widetilde C\log \alpha.\]
    From (4) we see that for sufficiently large $\alpha$,
    \[\lambda^*  < 2\cdot \frac{2K\alpha}{\log\alpha}.\]
    Multiplying this with the above yields
    \[F'(2u(0)) < K\widetilde C.\]
    Setting $\delta = K\widetilde C$ yields the following statement: for all $\delta >0$, there exist arbitrarily large $\alpha$ such that
    \begin{equation}
        F'\bigl(2u(0)\bigr) < \delta.
    \end{equation}
    We will use this form of the negation.\\ \\
    Existence of a $v$ solving (14) is given by Proposition 5.2 in the Appendix. Negation gives, for each $\delta > 0$, an $\alpha$ satisfying both Lemma 4.2 and (15). For small enough $\delta$ and corresponding $\alpha$, we show this $v$ must be positive on $[0,\alpha]$. \\ \\
    Assume for the sake of contradiction that there exists $z_0\in(0,\alpha]$ such that $v(z_0) = 0$. Since $v(0)$ is positive, $z_0$ can be chosen such that $v(z) > 0$ for all $z < z_0$. Then for any $s\in [0,z_0]$, by the Mean Value Theorem there exists $\xi(s)\in[\bar u(s), \bar u(s) + v(s)]$ such that
    \begin{equation}\label{bb}
        F((\bar u + v)(s)) - F(\bar u(s)) = v(s)F'(\xi(s)).
    \end{equation}
    From $F'$ increasing, we have
    \[v(s)F'(\xi(s)) \leq v(s)F'(\bar u(s)+v(s)) \leq v(s)F'(\bar u(0) + v(0)).\]
    If
    \[v(0) < u(0),\]
    then combining (15), \eqref{bb}, and these inequalities we obtain
    \[F((\bar u + v)(s)) - F(\bar u(s)) \leq \delta v(s),\]
    so
    \[-v'(z) \leq \frac{\delta}{z\bar\mu(z)}\int_0^z \bar\psi(s)\bar\mu(s)v(s)\mathrm{d}s.\]
    Using integration by parts, we get
    \begin{align*}
        \int_0^{z_0} v(z)\mathrm{d}z &= -\int_0^{z_0}zv'(z)\mathrm{d}z \\
        &\leq\delta\int_0^{z_0}\int_0^z \frac{\bar\psi(s)\bar\mu(s)v(s)}{\bar \mu(z)}\mathrm{d}s\mathrm{d}z \\
        &= \delta\int_{0}^{z_0}\int_{s}^{z_0}\frac{\bar\psi(s)\bar\mu(s)v(s)}{\bar \mu(z)}\mathrm{d}z\mathrm{d}s \\
        &\leq \delta\int_0^{z_0}\kappa(s)v(s)\mathrm{d}s.
    \end{align*}
    Observe
    \[v(s) = -\int_s^{z_0}v'(t)\mathrm{d}t  = \int_0^{z_0}\mathds 1_{[0,t]}(s)\cdot(-v'(t))\mathrm{d}t .\]
    Thus,
    \begin{align*}
        \delta\int_0^{z_0}\kappa(s)v(s)\mathrm{d}s &= \delta\int_0^{z_0}\int_0^{z_0}\mathds 1_{[0,t]}(s)\kappa(s)(-v'(t))\mathrm{d}t \mathrm{d}s.
    \end{align*}
    Integrating with respect to $s$ first and applying Lemma 4.2, we see that for $t\in[0,z_0]$,
    \[\int_0^{z_0}\mathds 1_{[0,t]}(s)\kappa(s)\mathrm{d}s =  \int_0^t \kappa(s)\mathrm{d}s \leq kt,\]
    so
    \begin{align*}
        \int_0^{z_0}v(z)\mathrm{d}z \leq k\delta \int_0^{z_0}t\cdot(-v'(t))\mathrm{d}t  = k\delta\int_0^{z_0}v(t)\mathrm{d}t .
    \end{align*}
    Since the integral is positive, we get a contradiction for $\delta < 1/k$.
\end{proof}
\begin{proof}[Proof of Theorem 1.2]
    Assume Theorem 1.2 is false, and take $\alpha$ and $v$ from Proposition 4.1. Setting $w_1 \coloneqq \bar u + v$, we see that $w_1$ solves the problem
    \[(P)_\rho = \begin{cases}w_\rho'(z) = -\frac{\rho\lambda^*}{4\alpha}\cdot\frac{1}{z\bar\mu(z)}\int_0^z\bar\psi(s)\bar\mu(s)\tilde f(w_\rho(s))\mathrm{d}s  & z\in(0,\alpha),\\ w_\rho(0)=\gamma_0\end{cases}\]\\
    for $\rho = 1$, where $w_1(\alpha) > 0$ and $\gamma_0\coloneqq u(0)+v(0)$. Note here that the problem $(P)_\rho$ has a unique solution for any value $\rho > 0$ (see Proposition 5.1 in Appendix, with $h\equiv 0$ and $g = (\rho\lambda^*/4\alpha)f$). First, we show the existence of $\rho' < \infty$ such that $w_{\rho'}(\alpha) < 0$. We have:
    \begin{align*}
        -w_{\rho'}'(z) &= \frac{\rho'\lambda^*}{4\alpha}\cdot\frac{1}{z\bar\mu(z)}\int_0^z\bar\psi(s)\bar\mu(s)\tilde f(w_\rho(s))\mathrm{d}s \\
        &\geq \frac{\rho'\lambda^*}{4\alpha}\cdot \frac{f(0)}{z\bar\mu(z)}\int_0^z\bar\psi(s)\bar\mu(s)\mathrm{d}s.
    \end{align*}
    Since
    \[\lim_{z\to 0^+} \frac 1{z\bar\mu(z)}\int_0^z\bar\psi(s)\bar\mu(s)\mathrm{d}s = 1,\]
    and for all $z\in(0,\alpha]$,
    \[\frac 1{z\bar\mu(z)}\int_0^z\bar\psi(s)\bar\mu(s)\mathrm{d}s \geq y \]
    for some $0<y<1$, we have
    \[-w_{\rho'}'(z) \geq \left(\frac{\lambda^*f(0)}{4\alpha}\cdot y\right)\rho'.\]
    For $\rho'$ large enough, this gives $w_{\rho'}(\alpha) < 0$. \\ \\
    Now we show that $w_\rho(\alpha)$ is continuous with respect to $\rho$ on $[1,\rho']$. If $\rho_1,\rho_2\in[1,\rho']$, then 
    \begin{align*}
        |(w_{\rho_1} - w_{\rho_2})'(z)| &\leq \frac{\lambda^*}{4\alpha}\cdot\frac{1}{z\bar\mu(z)}\int_0^z \bar\psi(s)\bar\mu(s)\Bigl|\rho_1\tilde f(w_{\rho_1}(s)) - \rho_2\tilde f(w_{\rho_2}(s))\Bigr|\mathrm{d}s.
    \end{align*}
    Examining the last term in the integrand and using that $\tilde f$ is Lipschitz on $(-\infty,\gamma_0]$:
    \begin{align*}
        |\rho_1\tilde f(w_{\rho_1}(s)) - \rho_2\tilde f(w_{\rho_2}(s))| &= \Bigl|\rho_1(\tilde f(w_{\rho_1}(s))-\tilde f(w_{\rho_2}(s))) + (\rho_1-\rho_2)\tilde f(w_{\rho_2}(s))\Bigl| \\
        &\leq L\rho'|(w_{\rho_1} - w_{\rho_2})(s)| + |\rho_1-\rho_2|\cdot \tilde f(\gamma_0),
    \end{align*}
    for some finite $L$. It follows that
    \[|(w_{\rho_1} - w_{\rho_2})'(z)|\leq C_1\max_{s\in[0,z]}|(w_{\rho_1} - w_{\rho_2})(s)| + C_2 |\rho_1 -\rho_2|,\]
    for some finite $C_1, C_2$.
    This implies
    \begin{align*}
        |(w_{\rho_1} - w_{\rho_2})(z)| &= \left|\int_0^z(w_{\rho_1} - w_{\rho_2})'(s)\mathrm{d}s\right| \\
        &\leq \int_0^z |(w_{\rho_1} - w_{\rho_2})'(s)|\mathrm{d}s \\
        &\leq C_1\int_0^z\max_{t\in[0,s]}|(w_{\rho_1} - w_{\rho_2})(t)|\mathrm{d}s + \alpha C_2 |\rho_1-\rho_2|. 
    \end{align*}
    Notice that the above inequality holds if $z$ in the first expression is replaced by any $r\in[0,z]$, meaning
    \[\max_{r\in[0,z]}|(w_{\rho_1} - w_{\rho_2})(r)| \leq C_1\int_0^z\max_{t\in[0,s]}|(w_{\rho_1} - w_{\rho_2})(t)|\mathrm{d}s + \alpha C_2 |\rho_1-\rho_2|.\]
    By Grönwall's inequality, we obtain
    \[\max_{r\in[0,\alpha]}|(w_{\rho_1} - w_{\rho_2})(r)| \leq C_2\alpha e^{C_1\alpha}|\rho_1-\rho_2|,\]
    which implies $w_\rho(\alpha)$ is Lipschitz, and therefore continuous with respect to $\rho$. \\ \\
    Since $w_\rho(\alpha)$ is continuous on $[1,\rho']$, $w_1(\alpha) > 0$, and $w_{\rho'}(\alpha)< 0$, the Intermediate Value Theorem guarantees the existence of $\rho^*\in (1,\rho')$ such that
    \[w_{\rho^*}(\alpha) = 0.\]
   If $w(r)\coloneqq w_{\rho^*}(\alpha r^2)$, then $w$ is a solution to (4) with $\lambda = \rho^*\lambda^* > \lambda^*$, contradicting the definition of $\lambda^*$.
    \\ \\
    This completes the proof of Theorem 1.2.
    \end{proof}
    \section{Appendix}
    In this section, we provide and prove existence and uniqueness results for the problems described in Section 4. These results are deferred here since they are standard in the theory of ordinary differential equations.
    \begin{prop}
        Let $g:\mathbb [0,\infty)\to(0,\infty)$ be an increasing convex $C^1$ function; define $\tilde g:\mathbb R\to(0,\infty)$ by
        \[\tilde g(t) = \begin{cases}g(t) & t\geq 0, \\ g(0) & t < 0;\end{cases}\]let $h:[0,\alpha]\to \mathbb R$ be a continuous function; and fix $v_0 \in\mathbb R$. Then there exists a unique $C^1$ function $v:[0,\alpha]\to\mathbb R$ that solves the initial value problem
        \[\begin{cases}v'(z) = -\frac{1}{z\bar\mu(z)}\int_0^z \bar\psi(s)\bar\mu(s)\tilde g(h(s)+v(s))\mathrm{d}s & z\in(0,\alpha), \\ v(0) = v_0,\end{cases}\]
        where $\bar{\psi}$ and $\bar{\mu}$ are as defined in previous sections.
    \end{prop}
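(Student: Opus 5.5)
I would prove Proposition 5.1 by recasting the initial value problem as a fixed-point problem on $C([0,\alpha])$ and applying a contraction argument with a weighted norm, so that no smallness of $\alpha$ is needed. Integrating the equation from $0$ to $z$ and using $v(0)=v_0$, the problem is equivalent (for continuous $v$) to $v=\mathcal T v$, where
\[
(\mathcal T v)(z) = v_0 - \int_0^z \frac{1}{t\bar\mu(t)}\int_0^t \bar\psi(s)\bar\mu(s)\tilde g(h(s)+v(s))\,\mathrm{d}s\,\mathrm{d}t .
\]
The first step is to check that $\mathcal T$ maps $C([0,\alpha])$ into itself. The integrand in $t$ is bounded in absolute value by $\frac{1}{t\bar\mu(t)}\int_0^t \bar\mu(s)\,\mathrm{d}s\,\sup_{[0,t]}|\tilde g(h+v)|$. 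Since $\bar\mu\ge1$ is increasing and $\bar\psi\le1$, this is at most $\sup_{[0,t]}\tilde g(h+v)$, which is finite because $h+v$ is continuous on a compact set and $\tilde g$ is continuous. Hence the inner expression is a bounded continuous function of $t\in(0,\alpha]$, and it extends continuously to $t=0$ with value $\bar\psi(0)\tilde g(h(0)+v(0))$. So $\mathcal T v$ is in fact $C^1$.

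The key step is the Lipschitz estimate. The function $\tilde g$ is globally Lipschitz on $(-\infty,R]$ for each $R$, because $g\in C^1$ and $\tilde g$ is constant on $(-\infty,0)$. It is not globally Lipschitz on $\mathbb R$ when $g'$ is unbounded, and this is the main obstacle: we need an a priori bound on the solution from above. Here monotonicity helps. Since $\tilde g>0$, any fixed point satisfies $v'\le0$, so $v\le v_0$ and $h+v\le \|h\|_\infty+v_0=:R$. I would therefore replace $\tilde g$ by the truncation $g_R(t)=\tilde g(\min(t,R))$, which is globally Lipschitz with constant $L=\max_{[0,R]}g'$. For $v,w\in C([0,\alpha])$ the same estimate as above gives
\[
|(\mathcal T_R v-\mathcal T_R w)(z)|\le L\int_0^z \max_{[0,t]}|v-w|\,\mathrm{d}t .
\]
With the norm $\|v\|_\beta=\max_z e^{-\beta z}\max_{[0,z]}|v|$ and $\beta=2L$, this makes $\mathcal T_R$ a contraction with constant $\tfrac12$. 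Alternatively, iterating the estimate shows that some power of $\mathcal T_R$ is a contraction, since $\|\mathcal T_R^n v-\mathcal T_R^n w\|_\infty\le \frac{(L\alpha)^n}{n!}\|v-w\|_\infty$. Banach's theorem then yields a unique continuous fixed point $v_R$.

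Finally I would remove the truncation. The fixed point $v_R$ has $v_R'\le0$ because $g_R>0$, so $h+v_R\le R$ and $g_R(h+v_R)=\tilde g(h+v_R)$. Thus $v_R$ solves the original problem, and it is $C^1$ by the first step. For uniqueness, take any $C^1$ solution $w$. The same sign argument gives $h+w\le R$, so $w$ is a fixed point of $\mathcal T_R$ and therefore $w=v_R$. The $C^1$ regularity at $z=0$ also follows from the limit computed in the first step, with $v'(0)=-\bar\psi(0)\tilde g(h(0)+v_0)$. Convexity of $g$ is not needed for this argument.
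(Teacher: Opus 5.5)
Your argument is correct and is essentially the paper's proof. Both use the same integral fixed-point map, the same estimate $|(\mathcal T v-\mathcal T w)(z)|\le L\int_0^z\max_{[0,t]}|v-w|\,\mathrm{d}t$, the same iterated-contraction conclusion (your weighted norm is an equivalent variant), and the sign $v'\le 0$ to get the upper bound under which $\tilde g$ is Lipschitz. The only difference is technical: the paper restricts $T$ to the closed invariant set $X=\{v\le v_0\}$ rather than truncating $\tilde g$, and your version has the small advantages of stating explicitly that every $C^1$ solution satisfies $v\le v_0$ (so uniqueness holds among all solutions, not just within $X$) and of not using convexity.
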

    \begin{proof}
        Let $h_0$ be the maximum value of $h$, which exists by the Extreme Value Theorem.
        We have by convexity of $g$ that
        \[\tilde g'(t) \leq \tilde g'(h_0 + v_0)\]
        for all $t \leq h_0 + v_0$, so $\tilde g$ is Lipschitz on $(-\infty, h_0 + v_0]$ with constant $\ell \coloneqq g'(h_0 + v_0)$. \\ \\
        Integrating the equation, we see that $v$ solves the problem if and only if
        \[v(z) = v_0 - \int_0^z \frac 1{s\bar\mu(s)}\int_0^s \bar\psi(t)\bar\mu(t) \tilde g(h(t)+v(t))\mathrm{d}t ,\]
        or equivalently if $v$ is a fixed point of the operator $T$ defined by
        \[Tv(z) = v_0 - \int_0^z \frac 1{s\bar\mu(s)}\int_0^s \bar\psi(t)\bar\mu(t) \tilde g(h(t)+v(t))\mathrm{d}t .\]
        Consider the following subspace of $C^0[0,\alpha]$:
        \[X\coloneqq \{f\in C^0[0,\alpha]: \max_{z\in[0,\alpha]}f(z) \leq v_0\}.\]
        Recall that $C^0[0,\alpha]$ is a complete metric space under 
        \[d(u,v) =\Vert{u-v}\Vert=\max_{z\in[0,\alpha]}|u(z) - v(z)|\]
        and $X$ is closed, so $X$ is complete. \\ \\
        First, we show that $T$ maps $X$ to itself. Indeed, if $v\in X$, then the function
        \[s\mapsto \frac 1{s\bar\mu(s)}\int_0^s \bar\psi(t)\bar\mu(t)\tilde g(h(t)+v(t))\mathrm{d}t .\]
        is continuous on $(0,\alpha]$, so to ensure that $Tv$ is continuous it suffices to check 
        \[\lim_{s\to 0^+}\frac 1{s\bar\mu(s)}\int_0^s \bar\psi(t)\bar\mu(t)\tilde g(h(t) + v(t))\mathrm{d}t\] exists and is finite. One can verify using L'Hôpital's rule that the above limit is given by $\tilde g(h(0) + v_0)$, so we have $Tv\in C^0[0,\alpha]$. \\ \\
        From the definition of $T$, we see that $Tv(z)$ is a decreasing function of $z$ and that $Tv(0) = v_0$, so
        \[\max_{z\in[0,\alpha]}Tv(z) = v_0.\]
        This gives $Tv\in X$. \\ \\
        For any $u,v\in X$,
        \begin{align*}
            |Tu(z) - Tv(z)| &\leq \int_0^z \frac 1{s\bar\mu(s)}\int_0^s \bar\mu(t) \bigl|\tilde g(h(t) + u(t)) - \tilde g(h(t)+v(t))\bigr|\mathrm{d}t \mathrm{d}s \\
            &\leq \ell\int_0^z \frac 1{s\bar\mu(s)}\int_0^s \bar\mu(t)\max_{r\in[0,t]}\bigl|[h(r) + u(r)] - [h(r) + v(r)]\bigr|\mathrm{d}t \mathrm{d}s \\
            &\leq \ell\int_0^z \max_{r\in[0,s]}|u(r) - v(r)|\mathrm{d}s,
        \end{align*}
        for all $z\in[0,\alpha]$. The above bound remains valid if $z$ in the first expression is replaced with any $r\in[0,z]$, so
        \[\max_{r\in[0,z]}|Tu(r) - Tu(r)|\leq \ell\int_0^z \max_{r\in[0,s]}|u(r) - v(r)|\mathrm{d}s.\]
        We claim that for all positive integers $n$,
        \[\max_{r\in[0,z]}|T^nv(r)-T^nu(r)| \leq \frac{(\ell z)^n}{n!}\Vert u-v\Vert,\]
        for all $z\in[0,\alpha]$, from which we will obtain
        \[\Vert T^nv - T^nu\Vert \leq \frac{(\ell \alpha)^n}{n!}\Vert u-v\Vert.\]
        Indeed, we have by induction that for all $z\in[0,\alpha]$,
        \begin{align*}
            \max_{r\in[0,z]}|T^nu(r) - T^nv(r)| &\leq \ell \int_0^z \max_{r\in[0,s]}|T^{n-1}u(r)-T^{n-1}v(r)|\mathrm{d}s \\
            &\leq \frac{\ell^n}{(n-1)!}\Vert u-v\Vert \int_0^zs^{n-1}\mathrm{d}s \\
            &= \frac{\ell^n}{(n-1)!}\Vert u-v\Vert\cdot \frac{z^n}n \\
            &= \frac{(\ell z)^n}{n!}\Vert u-v\Vert.
        \end{align*}
        Since
        \[\lim_{n\to\infty} \frac{(\ell \alpha)^n}{n!} = 0,\]
        there exists a positive integer $N$ such that $T^N$ is a contraction. By the Banach fixed-point theorem, $T^N$ has a unique fixed point $v^*$, satisfying
        \[T^Nv^* = v^*.\]
        Taking $T$ on both sides gives
        \[T^{N+1}v^* = Tv^*\implies T^N(Tv^*) = Tv^*,\]
        so $Tv^*$ is also a fixed point of $T^N$. Since the fixed point is unique, we get
        \[v^* = Tv^*,\]
        so $v^*$ is a fixed point of $T$, i.e. a solution. Also noting that any fixed point of $T$ is a fixed point of $T^N$, we see that $v^*$ is a unique fixed point of $T$. \\ \\
        Finally, we note that $v^*$ is $C^1$ as it is equal to an integral of a $C^0$ function.
    \end{proof}
    \begin{prop}
        The problem
        \[\begin{cases}v'(z) = -\frac 1{z\bar \mu(z)}\int_0^z \bar\psi(s)\bar\mu(s)[\widetilde F((\bar u+v)(s))-\widetilde F(\bar u(s))]\mathrm{d}s & z\in(0,\alpha), \\ v(0) = v_0, \end{cases}\]
        admits a solution for all $\alpha > 0$.
    \end{prop}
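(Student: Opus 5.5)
Existence for Proposition 5.2 reduces to Proposition 5.1 after the substitution $w=\bar u+v$. If $v$ solves the problem in Proposition 5.2, then since $\bar u$ solves \eqref{d},
\[(\bar u+v)'(z) = -\frac{1}{z\bar\mu(z)}\int_0^z\bar\psi(s)\bar\mu(s)\widetilde F((\bar u+v)(s))\,\mathrm{d}s - \frac{1}{z\bar\mu(z)}\int_0^z\bar\psi(s)\bar\mu(s)\bigl[F(\bar u(s))-\widetilde F(\bar u(s))\bigr]\mathrm{d}s.\]
Here $\bar u\geq 0$ on $[0,\alpha]$ (it is decreasing with $\bar u(\alpha)=0$), so $\widetilde F(\bar u)=F(\bar u)$ and the second term vanishes. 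Conversely, the same computation shows that if $w$ solves
\[w'(z) = -\frac{1}{z\bar\mu(z)}\int_0^z\bar\psi(s)\bar\mu(s)\widetilde F(w(s))\,\mathrm{d}s,\qquad w(0)=\bar u(0)+v_0,\]
then $v:=w-\bar u$ solves the problem in Proposition 5.2. To see this, subtract \eqref{d} from the equation for $w$, and use that the integral of a difference is the difference of the integrals. Both integrals are finite because the integrands are continuous on $[0,z]$.

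So it suffices to solve the problem for $w$. This is exactly Proposition 5.1 with $g=F=\frac{\lambda^*}{4\alpha}f$, $h\equiv 0$, and initial value $\bar u(0)+v_0$. The hypotheses hold: $F$ maps $[0,\infty)$ to $(0,\infty)$ and is increasing, convex and $C^1$, since $f$ is and $\lambda^*/(4\alpha)>0$. Also $h\equiv0$ is continuous, and $\tilde g$ coincides with $\widetilde F$. Proposition 5.1 then gives a unique $C^1$ solution $w$ on $[0,\alpha]$, and $v=w-\bar u$ is $C^1$ with $v(0)=v_0$.

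Alternatively, one could apply Proposition 5.1 directly to $v$ with $h=\bar u$. The catch is that the integrand there is $\tilde g(h+v)$ alone, not the difference $\widetilde F(\bar u+v)-\widetilde F(\bar u)$, so the subtracted term would still have to be absorbed using \eqref{d}. That amounts to the same reduction. There is no real obstacle here. The only points to check are that $\widetilde F(\bar u)=F(\bar u)$, i.e.\ that $\bar u\geq0$, and that $\bar u$ is continuous on $[0,\alpha]$ with $\bar u(0)$ finite. Both hold because $u^*_\alpha$ is a classical solution, so the converted equation \eqref{d} holds on all of $(0,\alpha)$.
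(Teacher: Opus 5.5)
Your proof is correct and is essentially the paper's. The paper defines $v_1(z)=-\int_0^z\frac{1}{s\bar\mu(s)}\int_0^s\bar\psi\bar\mu\,\widetilde F(\bar u)\,\mathrm{d}t\,\mathrm{d}s$, which equals $\bar u(z)-\bar u(0)$ by \eqref{d}, and applies Proposition 5.1 with $h=\bar u-v_1\equiv\bar u(0)$, so its solution $v_2-v_1$ is exactly your $w-\bar u$. The only difference is bookkeeping: the paper subtracts the equations for $v_2$ and $v_1$ directly, so it never uses \eqref{d} or $\bar u\geq 0$ and works for any continuous $\bar u$; this also means your side remark that the subtracted term ``would still have to be absorbed using \eqref{d}'' is not quite accurate.
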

    \begin{proof}
        Let
    \begin{equation}\label{aa}
        v_1(z) \coloneqq  -\int_0^z\frac 1{s\bar\mu(s)}\int_0^s\bar\psi(t)\bar\mu(t)\widetilde F(\bar u(t))\mathrm{d}t \mathrm{d}s,
    \end{equation}
    and let $v_2$ solve the problem
    \[\begin{cases}
        v_2'(z) = -\frac 1{z\bar\mu(z)}\int_0^z \bar\psi(s)\bar\mu(s)\widetilde F((\bar u + v_2 - v_1)(s))\mathrm{d}s & z\in(0,\alpha), \\
        v_2(0) = v_0;
    \end{cases}\]
    the existence of $v_2$ is given by Proposition 5.1 with $h = \bar u - v_1$ and $g=F$. Differentiating \eqref{aa} and subtracting yields
    \[(v_2-v_1)'(z) = -\frac 1{z\bar\mu(z)}\int_0^z\bar\psi(s)\bar\mu(s)\bigl[\widetilde F((\bar u + v_2 - v_1)(s)) - \widetilde F(\bar u(s))\bigr]\mathrm{d}s.\]
    Additionally, 
    \[(v_2 - v_1)(0) = v_0 - 0 = v_0.\]
    Thus,
    \[v \coloneqq  v_2 - v_1\]
    solves the desired problem for any value of $\alpha$.
    \end{proof}
    \section{Acknowledgements}
    This project was carried out during the 2026 REU program at the Department of Mathematical Sciences at Kent State University, supported by the National Science Foundation under Grant No. DMS-2439984. The authors would like to thank Peter Gordon and Fedor Nazarov for their guidance and assistance on the project, and Zachary Chase for assistance with making edits on the paper.
    \section{References}
    [GHH] P. V. Gordon, U. G. Hedge, M. C. Hicks, An elementary model for autoignition of free round turbulent jets, SIAM J. Appl. Math 78 (2) (2018), 705-718
    \\ \\
    \noindent [GMN] P. V. Gordon, V. Moroz, F. Nazarov, Gelfand-type problem for turbulent jets, J. Differential Equations 269 (2020), 5959-5996
\end{document}